\newcommand{\mf}[1]{\mathfrak{#1}}
\theoremstyle{plain}
\newtheorem{thm}{Theorem}
\newtheorem*{cor}{Corollary}
\newtheorem{corr}{Corollary}
\newtheorem*{lem}{Lemma}
\theoremstyle{definition}
\theoremstyle{definition}
\newtheorem*{remark}{\textbf{Remark}}
\author{Soumya D. Sanyal}
\title{Bruns}
\begin{document}

\begin{center} 
`` \textbf{\large{``Jede'' endliche freie Aufl\"{o}sung ist freie Aufl\"{o}sung\\ eines von drei Elementen erzeugten Ideals}}'' \\
\vspace{5mm}
\textbf{Winfried Bruns}\\
\vspace{5mm}
\textbf{Journal of Algebra \textbf{39}, 429-439 (1976)}\\
\vspace{5mm}
\noindent A translation into English:\\
\vspace{5mm}
\textbf{`Every' finite free resolution is a free resolution of an ideal generated by three elements.}\\
\vspace{5mm}
\noindent Translation by: Soumya D. Sanyal \footnote{\noindent Soumya Deepta Sanyal, 28 Math. Sci. Bldg., Columbia, MO 65211; sds2p8@mail.mizzou.edu\\} $^,$ \footnote{Translator's note: The translator gratefully acknowledges support from the AMS, the NSF and the AMS Mathematics Research Communities program. The translation was begun during the 2010 AMS MRC in Commutative Algebra.\\

\noindent The translator thanks H. Srinivasan for helpful comments on the translation, A. Heinecke for proofreading the translation and C. Chiasson for assistance with the translation of Remark 4 to Theorem 3.\\

\noindent The translator thanks W. Bruns for helpful comments and for permitting the distribution of this translation.\\}\\
\vspace{5mm}
June 29, 2010\\
Present draft: December 24, 2010\\
\end{center}

\setcounter{section}{0}

\section{Introduction} In [4] and [7], Burch and Kohn proved that when $R$ is a commutative Noetherian ring, and $n$ a natural number that is the homological dimension \footnote{Translator's note: The homological dimension of a module is now more commonly known as its projective dimension.} of a torsionless $R$-module, then there exists an ideal in $R$ generated by at most 3 elements, whose homological dimension is precisely $n$. Buchsbaum and Eisenbud [3, pg. 135, Conjecture; see also Theorem 7.2] have further conjectured that `every' finite free resolution is a free resolution of such an ideal. We prove the following generalisation of their conjecture:\\

Let:\\

\begin{center}
\mbox{%

\xymatrix{
{0} \ar[r] & {F_n} \ar[r]^-{f_n} & {F_{n-1}} \ar[r] & {\dots} \ar[r] & {F_{m+1}} \ar[r]^-{f_{m+1}} & {F_m} \ar[r] & {M} \ar[r] & 0\\
}
}
\end{center}

\noindent be a projective resolution of $M$, where $F_m$ is a free $R$-module and $M$ is an m-torsionless $R$-module (or equivalently, $M$ is an m-th syzygy module) having a well defined rank. Let $r$ be the rank of the submodule Im$(f_{m+1})$. Then there exist homomorphisms:\\

$$c: F_m \rightarrow R^{r+m}; f_m: R^{r+m} \rightarrow R^{2m-1}; f_j: R^{2j+1} \rightarrow R^{2j-1}, j=1, \dots , m-1 $$

\noindent such that if $f'_{m+1}:=c \circ f_{m+1}$, the sequence:\\

\begin{center}
\mbox{%

\xymatrix{
{0} \ar[r] & {F_n} \ar[r]^-{f_n} & {F_{n-1}} \ar[r] & {\dots} \ar[r] & {F_{m+2}} \ar[r]^-{f_{m+2}} & {F_{m+1}} \ar[r]^-{f'_{m+1}} & {R^{r+m}} \\ & \ar[r]^-{f_m} & {R^{2m-1}} \ar[r] & {\dots} \ar[r] & {R^5} \ar[r]^-{f_2} & {R^3} \ar[r]^-{f_1} & {R} \\
}
}
\end{center}

\noindent is exact. (Theorem 3). When $m=2$ and the $F_k$, $k=2, \dots n$ are free modules, we obtain the conjecture of Buchsbaum and Eisenbud.\\

Our result also shows that any natural number that is the homological dimension of an $m$-torsionless $R$-module is the homological dimension of an $m$-torsionless $R$-module of rank $m$ generated by at most $2m+1$ elements.\\

The maps $f_i$ and $c$ above are constructed in Theorem 2, which states that if firstly $M$ is an $m$-torsionless $R$-module of rank $r>m$ and of finite homological dimension, and if secondly $g: R^n \rightarrow M$ is an epimorphism, then there exists a basis element $x$ of $R^n$ such that $M/{Rg(x)}$ is $m$-torsionless and of rank $r-1$. This result requires that $g(x)$ can be included as part of a basis for the free $R_{\mathfrak{p}}$-module $M_{\mathfrak{p}}$ for all prime ideals $\mathfrak{p}$ of $R$ having the property that depth$(R_{\mathfrak{p}}) \leq m$. Buchsbaum and Eisenbud have shown the existence of such elements $x \in R^n$, assuming certain hypotheses on the Krull dimension of the ring $R/\mathfrak{p}$. We are able to prove a similar statement, in which we replace their hypotheses on the Krull dimension of $R/\mathfrak{p}$ with conditions on the difference between $m$ and the depth of $R_{\mathfrak{p}}$ (Theorem 1).\\

One may replace the condition on the homological dimension of $M$ in the previous paragraph with a condition on the regularity of the rings $R_{\mathfrak{p}}$ whose depth is at most $m$ and still have the result hold. For rings satisfying this condition, we are able to generalize a theorem of Bourbaki [2, pg. 76, Theor\`{e}me 6], which states that any $m$-torsionless $R$-module of rank at least $m$ is an extension of an $m$-torsionless $R$-module of rank $m$ by a free $R$-module (Corollary 2 to Theorem 2).\\

In our paper, $R$ will throughout denote a commutative Noetherian ring and all $R$-modules $M, N, \dots$ shall be finitely generated. We denote by $Q(R)$ the total quotient ring of $R$. If $M \otimes Q(R)$ is a free $Q(R)$-module, then the rank of $M$ (denoted rank $M$) is defined to be the number of elements in a basis of $M \otimes Q(R)$. We remind the reader that if any two modules in a short exact sequence have well defined ranks, then so does the third ([9, chap 6]). We denote the homological dimension of $M$ over $R$ by dh($M$), and the homological codimension (depth) by codh($M$). When $R$ is a local ring we define the (well-defined) number of elements in a minimal generating set of $M$ by $\mu(M)$. Ass $M$ denotes the (finite) set of associated primes of $M$. We denote the grade of an ideal $\mathfrak{a}$ (with respect to elements in $R$) by grad($\mathfrak{a}$); thus grad($\mathfrak{a}) = $min$ \{$codh$ (R_{\mathfrak{p}}) \mid \mathfrak{p} \supset \mathfrak{a}\}$. For notational convenience, we shall define $\mathfrak{C}_n$ to be the set of prime ideals $\mathfrak{p}$ of $R$ such that codh$(R_{\mathfrak{p}}) \leq n$.\\

We briefly explain the notion of `$m$-torsionless' for a module. Suppose $F_1 \rightarrow F_0 \rightarrow M \rightarrow 0$ is a presentation of a module $M$, and let $D(M)$ be the cokernel of the dual of the homomorphism $F_1 \rightarrow F_0$ \footnote{Translator's note: this is the `transpose' of $M$ in the sense of Auslander, as remarked in ``Linear Free Resolutions and Minimal Multiplicity'', by D. Eisenbud and S. Goto; Journal of Algebra \textbf{88}, 89-133 (1984).}. The Ext$^i_R(D(M),R)$ modules for $i \geq 1$ do not depend on the choice of presentation of $M$. Thus we may say that $M$ is $m$-torsionless provided that Ext$^i_R(D(M),R)=0$ for $i=1, \dots m$ [1]. The nomenclature derives from the fact that Ext$^1_R(D(M),R)$ and Ext$^2_R(D(M),R)$ are the kernel and the cokernel respectively of the natural homomorphism of $M$ into its bidual. Hence $M$ is $1$-torsionless (resp. $2$-torsionless) if $M$ is torsionless (resp. reflexive) in the usual sense. If dh$(M) < \infty$ there are the following equivalent [1, Theorem 4.25] characterisations of $m$-torsionlessness:\\

\begin{enumerate}
\item $(a_m)$ Every R-regular sequence of at most $m$ elements is $M$-regular (thus if $m = 1$ then $M$ is torsion-free).
\item $(b_m)$ codh$(M_{\mathfrak{p}}) \geq$ min$(m,$ codh$(R_{\mathfrak{p}}))$ for every prime ideal $\mathfrak{p}$ of $R$.
\item $(s_m)$ $M$ is an $m$-th syzygy module of a projective resolution.
\item $(t_m)$ $M$ is $m$-torsionless.
\end{enumerate}
 
When the localization $R_{\mathfrak{p}}$ is a Gorenstein ring for all prime ideals $\mathfrak{p} \in \mathfrak{C}_{m-1}$, the above equivalences also hold even if dh$(M)$ is not finite [6, Theorem 4.6]. We call rings $R$ satisfying this property on its localisations $m$-Gorenstein. In this paper we will only consider $m$-Gorenstein rings; in fact, the rings $R_{\mathfrak{p}}: \mathfrak{p} \in \mathfrak{C}_{m-1}$, shall be regular local rings.\\

In formulating Theorem 1, we have used the term `$s$-basic'. A submodule $M'$ of $M$ is called $s$-basic in $M$ at $\mathfrak{p}$ if $\mu(M/M')_{\mathfrak{p}} \leq \mu(M_{\mathfrak{p}}) - s$. An element $x \in M$ is called \emph{basic} at $\mathfrak{p}$ if $Rx$ is 1-basic. In the proof of Theorem 1 we consider the set of prime ideals of $R$ such that $\mu(M_{\mathfrak{p}})>n$. This set is a variety in Spec$(R)$: if $I_n(M)$ is the sum of colon ideals $(N:M)$ where $N$ is generated by at most $n$ elements, then $\mu(M_{\mathfrak{p}})>n$ if and only if $\mathfrak{p} \supset I_n(M)$.\\

\section{On the existence of basic elements}

In this section we shall prove Theorem 1. The theorem asserts, under certain conditions, the existence of elements $y \in M$ that are basic at all $\mathfrak{p} \in \mathfrak{C}_m$. The result will allow us to pass from an $m$-torsionless $R$-module $M$ to an $m$-torsionless $R$-module $M/{Ry}$. Theorem 1 is largely analogous to [5, Theorem A]; we replace their condition on the Krull dimension of $R/{\mathfrak{p}}$ by a different condition on $\mathfrak{p}$ and an additional condition on $M$. (Remark 1 following the proof of Theorem 1 indicates a special case of this result).\\

\begin{thm} Let $n \geq 0$ be a natural number and $M$ be an $R$-module such that for all prime ideals $\mathfrak{p}, \mathfrak{q} \in \mf{C}_{n}$ we have that \textup{codh}$(R_{\mathfrak{p}}) >$ \textup{codh}$(R_{\mathfrak{q}})$ implies $\mu(M_{\mathfrak{p}}) \geq \mu(M_{\mathfrak{q}})$. Then:\\

\begin{enumerate}
\item If $\mu(M_{\mathfrak{p}})>n$ for every prime ideal $\mathfrak{p} \in \mathfrak{C}_n$, then there exists some $x \in M$ that is basic in $M$ at all $\mathfrak{p} \in \mathfrak{C}_n$.
\item Let $x_1, \dots , x_k \in M$, for $k \geq 1$ and define $M' := \Sigma_{i=1}^k{Rx_i}$. If $M'$ is \textup{min}$(k,n+1-$\textup{codh}$(R_{\mathfrak{p}}))$-basic in $M$ at all $\mathfrak{p} \in \mathfrak{C}_n$, then there exists $x' \in \Sigma_{i=2}^k{Rx_i}$ such that $x_1+x'$ is basic in $M$ at all $\mathfrak{p} \in \mathfrak{C}_n$.
\end{enumerate}
\end{thm}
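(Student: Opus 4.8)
\bigskip
\noindent\textbf{Proof proposal.}
First I would observe that part~(1) is the special case of part~(2) in which $x_1,\dots,x_k$ is taken to be an arbitrary generating system of $M$, so that $M'=M$: then $\mu(M_{\mathfrak p})>n$ forces $\mu(M_{\mathfrak p})\ge n+1\ge n+1-\operatorname{codh}(R_{\mathfrak p})\ge\min\!\big(k,\,n+1-\operatorname{codh}(R_{\mathfrak p})\big)$ for each $\mathfrak p\in\mathfrak{C}_n$, which is exactly the requirement that $M'=M$ be $\min\!\big(k,\,n+1-\operatorname{codh}(R_{\mathfrak p})\big)$-basic there, and the element $x_1+x'$ produced by~(2) is the one demanded in~(1). (When $M=0$ the hypothesis forces $\mathfrak{C}_n=\emptyset$ and there is nothing to prove.) So the plan reduces to proving~(2), which I would carry out by induction on $n$. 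Throughout I would use that $z\in M$ is basic at $\mathfrak p$ iff $\mu((M/Rz)_{\mathfrak p})=\mu(M_{\mathfrak p})-1$, and that if $z$ is basic at $\mathfrak p$ and $w\in\mathfrak p M_{\mathfrak p}$ then $z+w$ is still basic at $\mathfrak p$; and I would note that $\min\!\big(k,\,n+1-\operatorname{codh}(R_{\mathfrak p})\big)\ge 1$ for every $\mathfrak p\in\mathfrak{C}_n$ (since $\operatorname{codh}(R_{\mathfrak p})\le n$ and $k\ge 1$), so at each such $\mathfrak p$ some $R$-linear combination of $x_1,\dots,x_k$ is already basic --- the content of~(2) being to realise one simultaneously in the affine form $x_1+x'$ with $x'\in\sum_{i\ge 2}Rx_i$.

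In the base case $n=0$ one has $\mathfrak{C}_0=\operatorname{Ass}(R)$, a \emph{finite} set of primes; the monotonicity hypothesis is then vacuous, and the assumption says $M'$ is $1$-basic at each $\mathfrak p\in\operatorname{Ass}(R)$. At such a $\mathfrak p$, either $x_1$ is already basic, or $x_1\in\mathfrak p M_{\mathfrak p}$ and then some $w\in\sum_{i\ge 2}Rx_i$ lies outside $\mathfrak p M_{\mathfrak p}$, making $x_1+w$ basic; either way the coset $x_1+\sum_{i\ge 2}Rx_i$ meets the complement of $\mathfrak p M_{\mathfrak p}$ locally. The classical prime-avoidance lemma for cosets of a finitely generated submodule over finitely many distinct primes --- valid over any Noetherian ring, the possible finiteness of residue fields being handled in the usual way by multiplying by elements that are units modulo some of the $\mathfrak p$ and lie in the others --- then produces a single $x'\in\sum_{i\ge 2}Rx_i$ with $x_1+x'$ basic at every associated prime.

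For the inductive step I would first note that the hypotheses for $n$ restrict to those for $n-1$: $\mathfrak{C}_{n-1}\subseteq\mathfrak{C}_n$, the monotonicity condition is inherited, and $\min\!\big(k,\,n+1-\operatorname{codh}(R_{\mathfrak p})\big)\ge\min\!\big(k,\,n-\operatorname{codh}(R_{\mathfrak p})\big)$. Hence the inductive hypothesis gives $x''\in\sum_{i\ge 2}Rx_i$ with $y:=x_1+x''$ basic at all $\mathfrak p\in\mathfrak{C}_{n-1}$ (note $Ry+\sum_{i\ge 2}Rx_i=M'$), and it remains to add a correction $x'''\in\sum_{i\ge 2}Rx_i$ so that $x_1+(x''+x''')$ is basic also at the primes of $\mathfrak{C}_n$ of codepth exactly $n$, without destroying basicness on $\mathfrak{C}_{n-1}$. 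Since the locus where $y$ fails to be basic need not be Zariski-closed, I would enclose it in a closed set defined in terms of the Fitting ideals of $M$ and of $M/Ry$ and run a Noetherian induction on that set: pick a member $\mathfrak q$ of $\mathfrak{C}_n$ maximal in it (necessarily of codepth $n$), and correct $y$ by an element of $\mathfrak a\cdot\sum_{i\ge 2}Rx_i$, for a suitably chosen ideal $\mathfrak a$, so that the result is basic at $\mathfrak q$ while remaining essentially unchanged on the already-good part. Here the fact that $M'$ is $\min(k,\,n+1-n)=1$-basic at $\mathfrak q$ supplies a direction among $x_2,\dots,x_k$ transverse to $y$ at $\mathfrak q$, and the interplay of the monotonicity hypothesis with the weight $n+1-\operatorname{codh}(R_{\mathfrak p})$ is what permits the correction to be chosen small at the shallower primes already handled; one then checks that the ambient closed set shrinks strictly, so the induction terminates.

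The step I expect to be the main obstacle is exactly this last correction. One must produce a single $x'''\in\sum_{i\ge 2}Rx_i$ that simultaneously (a)~keeps $x_1+(x''+x''')$ basic at every $\mathfrak p\in\mathfrak{C}_{n-1}$ --- which is delicate because the codepth-$n$ bad primes typically lie in the Zariski closure of $\mathfrak{C}_{n-1}$, so one cannot simply make the correction vanish there --- (b)~makes it basic at the chosen maximal bad prime, and (c)~strictly decreases a Noetherian-induction invariant. Reconciling (a) and (b) is where the weight ``$n+1-\operatorname{codh}(R_{\mathfrak p})$'' in the $s$-basic hypothesis and the monotonicity assumption do their real work, and converting the non-closed bad locus into a setting where Noetherian induction applies is the technical heart of the argument. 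This is the analogue, for depth rather than Krull dimension, of the basic-element method of Serre, Bass and Eisenbud--Evans: control of the depths of the $R_{\mathfrak p}$ replaces control of the Krull dimensions of the $R/\mathfrak p$.
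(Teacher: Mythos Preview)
Your reduction of (1) to (2) matches the paper, but your induction on $n$ for (2) has a genuine gap at exactly the point you flag, and the paper avoids it by inducting on $k$ instead. For $k>1$ one seeks $a_1,\dots,a_{k-1}\in R$ so that $M''=R(x_1+a_1x_k)+\sum_{i=2}^{k-1}R(x_i+a_ix_k)$ is $\min(k-1,\,n+1-\operatorname{codh}R_{\mathfrak p})$-basic at every $\mathfrak p\in\mathfrak{C}_n$, after which the inductive hypothesis applies to $M''$. The key step is that the set $E$ of primes in $\mathfrak{C}_n$ where $M'$ fails to be $\min(k,\,n+2-\operatorname{codh}R_{\mathfrak p})$-basic is \emph{finite}: fixing $t=\operatorname{codh}R_{\mathfrak p}$ and $s=\mu(M_{\mathfrak p})-\min(k,n+2-t)$, the monotonicity hypothesis shows no $\mathfrak q$ with $\operatorname{codh}R_{\mathfrak q}<t$ contains $I_s(M/M')$, whence $\operatorname{grad}I_s(M/M')\ge t$; a short lemma (any prime $\mathfrak p\supset\mathfrak a$ with $\operatorname{codh}R_{\mathfrak p}=\operatorname{grad}\mathfrak a$ is associated to $R$ modulo a maximal regular sequence in $\mathfrak a$, hence there are only finitely many) then gives finiteness. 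The Eisenbud--Evans finite avoidance lemma [5, Lemma~3] now produces the $a_i$ handling all of $E$ at once, while outside $E$ the stronger basicness of $M'$ makes that of $M''$ automatic.

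Your scheme instead produces a single element $y$ basic on $\mathfrak{C}_{n-1}$ and then tries to correct it on the codepth-$n$ stratum. But any correction $x'''\in\sum_{i\ge 2}Rx_i$ must preserve basicness of $y+x'''$ at every prime of the infinite, non-closed set $\mathfrak{C}_{n-1}$, and at those primes you have already spent the surplus basicness of $M'$ in manufacturing $y$; there is nothing left to absorb the perturbation. Your Noetherian induction on an enclosing closed set does not rescue this: maximal primes of that closed set need not lie in $\mathfrak{C}_n$ at all, and a fix at one prime gives no control over the rest of $\mathfrak{C}_{n-1}$. The slack in the hypothesis is indexed by $k$ --- one sacrifices the generator $x_k$ to adjust $x_1,\dots,x_{k-1}$ --- not by $n$, which is why the paper's choice of induction variable works and yours, as it stands, does not.
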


We largely follow the proof of [5, Theorem A], although we use the following lemma instead of [5, Lemma 1]:\\

\begin{lem}
Let $\mathfrak{a}$ be an ideal of $R$. Then there exist only finitely many prime ideals $\mathfrak{p} \supset \mathfrak{a}$ such that \textup{codh}$(R_{\mathfrak{p}})=$ \textup{grad}$(\mathfrak{a})$.
\end{lem}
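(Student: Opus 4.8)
The plan is to reduce the statement to a standard fact about the set of minimal primes of an ideal together with a Noetherian induction on $\operatorname{grad}(\mathfrak{a})$. Set $g = \operatorname{grad}(\mathfrak{a})$. The primes $\mathfrak{p} \supset \mathfrak{a}$ with $\operatorname{codh}(R_{\mathfrak{p}}) = g$ are exactly those at which the ideal $\mathfrak{a}$ achieves its grade, so they are ``depth-minimal'' over $\mathfrak{a}$; I want to show there are only finitely many. The key observation is that any such $\mathfrak{p}$ must be an associated prime of $R/(x_1,\dots,x_g)$ for a suitable maximal $R$-regular sequence $x_1,\dots,x_g$ contained in $\mathfrak{a}$, and $\operatorname{Ass}(R/(x_1,\dots,x_g))$ is finite since $R$ is Noetherian.

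First I would choose a maximal $R$-regular sequence $x_1, \dots, x_g$ in $\mathfrak{a}$; such a sequence has length exactly $g = \operatorname{grad}(\mathfrak{a})$ because grade is computed as the common length of maximal regular sequences in the ideal. Let $J = (x_1,\dots,x_g)$. Now suppose $\mathfrak{p} \supset \mathfrak{a} \supseteq J$ with $\operatorname{codh}(R_{\mathfrak{p}}) = g$. The images of $x_1,\dots,x_g$ in $R_{\mathfrak{p}}$ form an $R_{\mathfrak{p}}$-regular sequence of length $g = \operatorname{codh}(R_{\mathfrak{p}})$ — regularity is preserved under localization at a prime containing the sequence as long as the quotient does not become zero, which holds here since $\mathfrak{p} \supseteq J$. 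Hence this sequence is a maximal $R_{\mathfrak{p}}$-regular sequence, so $\operatorname{depth}((R/J)_{\mathfrak{p}}) = \operatorname{depth}(R_{\mathfrak{p}}/JR_{\mathfrak{p}}) = 0$, which means $\mathfrak{p} R_{\mathfrak{p}} \in \operatorname{Ass}_{R_{\mathfrak{p}}}((R/J)_{\mathfrak{p}})$, i.e. $\mathfrak{p} \in \operatorname{Ass}_R(R/J)$.

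Since $R$ is Noetherian and $R/J$ is a finitely generated $R$-module, $\operatorname{Ass}_R(R/J)$ is finite, so there are only finitely many possibilities for such $\mathfrak{p}$, which is the claim. The main point to get right is the localization step: I must check that localizing a maximal regular sequence in $\mathfrak{a}$ at a prime $\mathfrak{p}$ containing $\mathfrak{a}$ and having $\operatorname{codh}(R_\mathfrak{p}) = g$ indeed yields a \emph{maximal} $R_\mathfrak{p}$-regular sequence, not merely a regular one; this is where the hypothesis $\operatorname{codh}(R_\mathfrak{p}) = \operatorname{grad}(\mathfrak{a})$ is used, forcing the localized sequence to have length equal to the depth of $R_\mathfrak{p}$ and hence to be non-extendable. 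One subtlety is that a priori different primes $\mathfrak{p}$ might force different choices of maximal regular sequence; but fixing one maximal $R$-regular sequence $x_1,\dots,x_g$ in $\mathfrak{a}$ at the outset suffices, since the argument above shows \emph{every} such $\mathfrak{p}$ lies in $\operatorname{Ass}_R(R/J)$ for that single fixed $J$. No Noetherian induction is actually needed once this is observed.
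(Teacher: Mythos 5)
Your proof is correct and follows essentially the same route as the paper: fix a regular sequence of length $\operatorname{grad}(\mathfrak{a})$ inside $\mathfrak{a}$, observe that any $\mathfrak{p}\supset\mathfrak{a}$ with $\operatorname{codh}(R_{\mathfrak{p}})=\operatorname{grad}(\mathfrak{a})$ has $\operatorname{codh}((R/J)_{\mathfrak{p}})=0$ and hence lies in the finite set $\operatorname{Ass}(R/J)$. The only cosmetic differences are that the paper disposes of the trivial case $\mathfrak{a}=R$ explicitly and that no appeal to Noetherian induction appears there either, just as you ultimately concluded.
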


\begin{proof}
The case $\mathfrak{a}=R$ is trivial. So suppose that $\mathfrak{a} \lneq R$ and let $x_1, \dots, x_n$, where $n=$ grad$({\mathfrak{a}})$, be an $R$-regular sequence contained in $\mathfrak{a}$. Then if $\mathfrak{p} \supset \mathfrak{a}$, $x_1, \dots, x_n$ is also an $R_{\mathfrak{p}}$-regular sequence; thus when codh$(R_{\mathfrak{p}})=n$, we have:\\

\begin{center}
codh$((R/{Rx_1+ \dots + Rx_n})_{\mathfrak{p}}) = \hspace{1mm} $codh$(R_{\mathfrak{p}}/({R_{\mathfrak{p}}x_1+ \dots + R_{\mathfrak{p}}x_n})) = 0.$\\
\end{center}
\vspace{2mm}

\noindent Thus $\mathfrak{p} \in $Ass $(R/{(Rx_1+\dots + Rx_n)})$. But there are only finitely many primes in Ass $(R/{Rx_1+\dots + Rx_n})$.
\end{proof}

\noindent \textbf{Proof of Theorem 1.} (1) follows from (2) by taking $x_1, \dots x_k$ to be members of a system of generators of $M$. We will prove (2) by induction on $k$. The case $k=1$ is trivial. Suppose then that $k>1$ and let $a_1, \dots ,a_{k-1}$ be given such that $M'' = R(x_1+a_1x_k) + \Sigma_{i=2}^{k-1}{R(x_i+a_ix_k)}$ is min$(k-1,n+1-$codh$(R_{\mathfrak{p}}))$-basic at all $\mathfrak{p} \in \mathfrak{C}_n$.\\

We begin by proving that there are only finitely many primes $\mathfrak{p} \in \mathfrak{C}_n$ at which $M'$ is not min$(k,n+2-$codh$(R_{\mathfrak{p}}))$-basic, and hence only finitely many $\mathfrak{p} \in \mathfrak{C}_n$ such that\\

\begin{center} 
$\mu(M/M')_{\mathfrak{p}}>\mu(M_{\mathfrak{p}})-$min$(k,n+2-$codh$(R_{\mathfrak{p}}))$.\\
\end{center}

We note that it is enough to prove this for fixed $t:=$codh$(R_{\mathfrak{p}})$ and fixed $s:=\mu(M_{\mathfrak{p}})-$min$(l,n+2-t)$, since $s$ and $t$ will take only finitely many values. For any $\mathfrak{q} \in \mathfrak{C}_n$ with codh$(R_{\mathfrak{q}})<t$, we obtain the following bound\\
\begin{center}
$\mu((M/M')_{\mathfrak{q}}) \leq \mu(M_{\mathfrak{q}})-$min$(k,n+1-$codh$(R_{\mathfrak{q}})) \leq \mu(M_{\mathfrak{q}})-$min$(k,n+2-t) \leq s,$\\ 
\end{center}

\noindent since for every $\mathfrak{p} \in \mathfrak{C}_n$ such that codh$(R_{\mathfrak{p}})=t$, we have $\mu(M_{\mathfrak{q}})\leq \mu(M_{\mathfrak{p}})$ by hypothesis. Thus $\mathfrak{q}$ does not contain $I_s(M/M')$, whence grad$(I_s(M/M')) \geq t$. In the case that grad$(I_s(M/M'))>t$, there are no prime ideals $\mathfrak{p}$ with codh$(R_{\mathfrak{p}})=t$ that contain $I_s(M/M')$, and in the case that grad$(I_s(M/M'))=t$ there are only finitely many such ideals, as follows from the previous lemma.\\

Now let $E$ be the finite set of primes $\mathfrak{p} \in \mathfrak{C}_n$ at which $M'$ is not min$(k,n+2-$codh$(R_{\mathfrak{p}}))$-basic in $M$. By [5, Lemma 3] there exist $a_1, \dots , a_{k-1} \in R$ such that $M'' = R(x_1+a_1x_k)+\Sigma_{i=2}^{k-1}{R(x_i+a_ix_k)}$ is min$(k-1,n+1-$codh$(R_{\mathfrak{p}}))$-basic at all $\mathfrak{p} \in E$. Note that this latter condition also holds at all $\mathfrak{p} \in \mathfrak{C}_n \setminus E$.\\

\qed

\begin{remark}
\begin{enumerate}
\item The statement and proof of Theorem 1 are valid more generally (for instance, as in [5, Theorem A]), provided that $R$ is a commutative Noetherian ring, $A$ a (not necessarily commutative) $R$-algebra that is finitely generated as an $R$-module, and $M$ a finitely generated $A$-module. For given $A$-modules $N$ the definitions of $\mu(N_{\mathfrak{p}})$ and $I_s(N)$ correspond to $\mu(A_{\mathfrak{p}},N_{\mathfrak{p}})$ and $I_s(A,N)$ respectively, as in [5]. However, we may not relax the condition that $R$ be Noetherian. Furthermore, Part 2 of Theorem 1 (which is analogous to [5, Theorem A(iib)]) can be generalised in the required manner by assuming $(a,x_1)$ is basic in $A \oplus M$ at all $\mathfrak{p} \in \mathfrak{C}_n$ for every $a \in A$, thus obtaining basic elements of the form $x_1 + ax',  x' \in \Sigma_{i=2}^{k}{Ax_i}$.
\item Several conclusions follow immediately from Theorem 1, each analogous to the colloraries of [5, Theorem A]. We give the following example, corresponding to a theorem of Serre, that indicates how to formulate these conclusions along the lines of [5]: if $P$ is a projective $R$-module with a well defined rank greater than max $\{$codh$(R_{\mathfrak{p}}) \mid \mathfrak{p} \in $Spec$(R) \}$, then $P$ splits as a direct sum of rank $1$ free modules. (For a proof, see [5, Corollary 1]; the hypotheses of Theorem 1 are satisfied since $P$ has a well defined rank).\\
\end{enumerate}
\end{remark}

\section{The reduction of ranks of $m$-torsionless modules}

In this section we show that we may `replace', in an appropriate sense, an $m$-torsionless $R$-module of rank greater than $m$ by an $m$-torsionless $R$-module of rank $m$.\\

\begin{thm}
Let $M$ be an $m$-torsionless $R$-module of rank $r \geq m$, and suppose that either \textup{dh}$(M) < \infty$ or $R_{\mathfrak{p}}$ is a regular local ring for all $\mathfrak{p} \in \mathfrak{C}_m$. Suppose further that $g: R^n \rightarrow M$ is an epimorphism, that $e_1, \dots , e_n$ constitute a basis for $R^n$ and $s := n-r+m+1$. Then there exist elements $x_s, \dots , x_n \in R^n$ satisfying the following conditions:\\

\begin{enumerate}
\item $e_1, \dots , e_{s-1}, x_s, \dots , x_n$ is a basis for $R^n$.
\item $g(x_s), \dots , g(x_n)$ are linearly independent elements of $M$.
\item $M' := M/{\Sigma_{i=s}^{n}{Rg(x_i)}}$ is $m$-torsionless and of rank $m$.
\item The natural epimorphism $p: R^n \rightarrow F := R^n/{\Sigma_{i=s}^{n}{Rx_i}}$ gives an isomorphism between \textup{Ker}$(g)$ and the kernel of the induced epimorphism $g': F \rightarrow M'$. 
\end{enumerate}
\end{thm}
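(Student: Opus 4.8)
\textbf{The plan} is to peel off the generators $g(x_n),g(x_{n-1}),\dots,g(x_s)$ one at a time, each produced by Theorem 1, lowering the rank by one at each step. One may assume $r>m$, since for $r=m$ one has $s=n+1$, there is nothing to choose, and (1)--(4) hold trivially. The observation that drives the whole argument is that $M_{\mathfrak p}$ is free over $R_{\mathfrak p}$ for every $\mathfrak p\in\mathfrak C_m$: property $(b_m)$ gives $\mathrm{codh}(M_{\mathfrak p})\ge\min(m,\mathrm{codh}(R_{\mathfrak p}))=\mathrm{codh}(R_{\mathfrak p})$, while $\mathrm{codh}(M_{\mathfrak p})\le\dim R_{\mathfrak p}$; in the regular case $\mathrm{codh}(R_{\mathfrak p})=\dim R_{\mathfrak p}$, so $M_{\mathfrak p}$ is maximal Cohen--Macaulay over a regular local ring and hence free, and in the case $\mathrm{dh}(M)<\infty$ the Auslander--Buchsbaum formula gives $\mathrm{dh}(M_{\mathfrak p})=\mathrm{codh}(R_{\mathfrak p})-\mathrm{codh}(M_{\mathfrak p})=0$. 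Hence $\mu(M_{\mathfrak p})=r$ for all $\mathfrak p\in\mathfrak C_m$, so the monotonicity hypothesis of Theorem 1 (taking $n=m$ there) is vacuous, and since $r\ge m+1$ the ``$\min(k,\cdot)$-basic'' hypothesis of Theorem 1(2) is available at every stage.

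\textbf{The single step} I would isolate is this. Suppose $P$ is an $m$-torsionless $R$-module of well-defined rank $\rho$ with $m<\rho\le r$ and $P_{\mathfrak p}$ free for all $\mathfrak p\in\mathfrak C_m$, and $h\colon R^t\twoheadrightarrow P$ is an epimorphism with basis $\epsilon_1,\dots,\epsilon_t$. Apply Theorem 1(2) to $P$ with $x_1:=h(\epsilon_t)$ and the other $x_i$ equal to $h(\epsilon_1),\dots,h(\epsilon_{t-1})$ (then $\sum Rx_i=P$, so the hypotheses hold): one obtains $a_1,\dots,a_{t-1}\in R$ such that, setting $\xi:=\epsilon_t+\sum_{i<t}a_i\epsilon_i$, the element $y:=h(\xi)$ is basic in $P$ at every $\mathfrak p\in\mathfrak C_m$. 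Now $\xi$ is unimodular, $\epsilon_1,\dots,\epsilon_{t-1},\xi$ is a basis of $R^t$, and $R^t/R\xi$ is free on $\bar\epsilon_1,\dots,\bar\epsilon_{t-1}$. Since the image of $y$ is part of a basis of the free module $P_{\mathfrak p}$ for every $\mathfrak p\in\mathrm{Ass}(R)\subseteq\mathfrak C_m$, the ideal $\mathrm{Ann}_R(y)$ vanishes at all associated primes of $R$, hence $\mathrm{Ann}_R(y)=0$ and $Ry\cong R$. Put $P':=P/Ry$ with induced epimorphism $h'\colon R^t/R\xi\to P'$. From $0\to R\to P\to P'\to 0$ the rank of $P'$ is $\rho-1$, and $P'$ is $m$-torsionless: at $\mathfrak p\notin\mathfrak C_m$ the depth lemma gives $\mathrm{codh}(P'_{\mathfrak p})\ge\min(\mathrm{codh}(P_{\mathfrak p}),\mathrm{codh}(R_{\mathfrak p})-1)\ge m$, and at $\mathfrak p\in\mathfrak C_m$, $P'_{\mathfrak p}$ is free, being the quotient of a free module by a basis element, so $(b_m)$ holds; finiteness of $\mathrm{dh}$ passes through the sequence (resp. the $m$-Gorenstein property of $R$ is unchanged), so $(b_m)\Leftrightarrow(t_m)$ applies and $P'$ is $m$-torsionless, with $P'_{\mathfrak p}$ free on $\mathfrak C_m$. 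Finally $h$ is injective on the free summand $R\xi$ (as $Ry\cong R$), so the projection $R^t\to R^t/R\xi$ carries $\mathrm{Ker}(h)$ isomorphically onto $\mathrm{Ker}(h')$: surjectivity onto the kernel is formal, and injectivity is $\mathrm{Ker}(h)\cap R\xi=0$.

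\textbf{To conclude}, apply the single step $r-m$ times, starting from $(M,g,R^n)$; at step $j$ one modifies the image of the last surviving basis vector and lifts the resulting $\xi$ to $x_{n-j+1}:=e_{n-j+1}+\sum_{i\le n-j}a_ie_i\in R^n$, so the ambient basis is changed only by an elementary transformation and $e_1,\dots,e_{n-j},x_{n-j+1},\dots,x_n$ remains a basis of $R^n$; at $j=r-m$ this is (1), since $n-(r-m)=s-1$. Writing $M^{(j)}:=M/N_j$ with $N_j:=\sum_{i>n-j}Rg(x_i)$, one has $N_j/N_{j-1}\cong R$ freely generated by the class of $g(x_{n-j+1})$, so a relation $\sum_{i\ge s}b_ig(x_i)=0$ projected successively onto the $M^{(j)}$ forces each $b_i=0$, giving (2); then $g$ is injective on the free summand $\sum_{i\ge s}Rx_i$, and (4) follows as in the single step. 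Finally $M'=M^{(r-m)}$ is, by the single step, $m$-torsionless of rank $m$, which is (3).

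\textbf{The hard part} is not invoking Theorem 1 --- its hypotheses are trivialized by the freeness of $M_{\mathfrak p}$ on $\mathfrak C_m$ --- but the bookkeeping: arranging the lifts so that the basis of $R^n$ is altered only by elementary moves, and threading the kernel identification (4) through the entire induction. The point that requires genuine care is checking at each step that $m$-torsionlessness survives, i.e. that $(b_m)$ holds at primes outside $\mathfrak C_m$ (via the depth lemma) and that the condition ensuring $(b_m)\Leftrightarrow(t_m)$ --- finite homological dimension, or the $m$-Gorenstein property --- is inherited by every quotient $M^{(j)}$.
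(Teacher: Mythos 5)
Your proposal is correct and follows essentially the same route as the paper: reduce to the single rank-lowering step, use local freeness of $M$ on $\mathfrak{C}_m$ (via $(b_m)$ plus Auslander--Buchsbaum or regularity) to apply Theorem 1(2) to the images of a basis, and verify $m$-torsionlessness of the quotient by splitting into $\mathfrak{p}\in\mathfrak{C}_m$ (free quotient) and $\mathfrak{p}\notin\mathfrak{C}_m$ (depth estimate from the short exact sequence). The only cosmetic difference is in (4), where you argue $\mathrm{Ker}(h)\cap R\xi=0$ directly from $\mathrm{Ann}_R(y)=0$, while the paper notes the restriction's kernel is a torsion-free module of rank $0$; both are fine.
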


\begin{proof}
We proceed by induction  on $r$. If $r=m$, there is nothing to prove. If $r>m$, it is enough to show the existence of an element $x \in R^n$ satisfying the following properties:\\

\begin{enumerate}
\item $e_1, \dots , e_{n-1}, x$ is a basis for $R^n$,
\item $g(x)$ is linearly independent,
\item $M'' := M/{Rg(x)}$ is $m$-torsionless and of rank $r-1$,
\item the canonical projection $p' := R^n \rightarrow R^n/{Rx}$ gives an isomorphism between Ker$(g)$ and the kernel of the induced epimorphism $g'' := R^n/{Rx} \rightarrow M''$.
\end{enumerate}

The inequality $r>m$ implies $\mu(M_{\mathfrak{p}})>m$ for all prime ideals $\mathfrak{p}$ in $R$. Our hypotheses on $M$ and $R$ imply that dh$(M_{\mathfrak{p}})< \infty$ for all $\mathfrak{p} \in \mathfrak{C}_m$ (in fact, dh$(M_{\mathfrak{p}})=0$), since if $\mathfrak{p} \in \mathfrak{C}_m$, one obtains by the characterization ($b_m$) of $m$-torsionless $R$-modules in the introduction that codh$(M_{\mathfrak{p}})=$ codh$(R_{\mathfrak{p}})$. Hence $M_{\mathfrak{p}}$ is free for every $\mathfrak{p} \in \mathfrak{C}_m$. As $M$ has rank $r$, we have that $\mu(M_{\mathfrak{p}})=\mu(M_{\mathfrak{q}})=r$ for all $\mathfrak{p}, \mathfrak{q} \in \mathfrak{C}_m$. The elements $g(e_1), \dots g(e_n)$ generate $M$, and so by Theorem 1, (2), there exist $a_1, \dots , a_{n-1} \in R$ such that if $x := e_n + \Sigma_{i=1}^{n-1}{a_ie_i}$, then $g(x)$ is not in $\mathfrak{p}M_{\mathfrak{p}}$, at all $\mathfrak{p} \in \mathfrak{C}_{m}$.\\

It is clear that for this choice of $x$, condition (1') is satisfied. Since $g(x)$ can be included as part of a basis of the free $R_{\mf{p}}$-module $M_{\mf{p}}$ for any $\mf{p} \in \mf{C}_{m}$ (and in particular, for any $\mf{p} \in $Ass $R)$, we see that $M''_{\mf{p}}$ is a free $R_{\mf{p}}$-module for these primes $\mf{p}$ and that $g(x)$ is a linearly independent element of $M$. Thus (2') follows.\\

If $\mf{p} \notin \mf{C}_m$ (i.e., codh$(R_{\mf{p}}) > m$), it follows from the exact sequence\\ 

\begin{center}
\mbox{%

\xymatrix{
{0} \ar[r] & {R_{\mf{p}}g(x)} \ar[r] & {M_{\mf{p}}} \ar[r] & {M''_{\mf{p}}} \ar[r] & {0}\\
}
}
\end{center}

\noindent that codh$(R_{\mf{p}}g(x))=$ codh$(R_{\mf{p}})>m$, codh$(M_{\mf{p}}) \geq m$ and codh$(M''_{\mf{p}}) \geq m$. Altogether, this gives codh$(M''_{\mf{p}}) \geq $ min$(m,$ codh$(R_{\mf{p}}))$ for all primes $\mf{p}$ in $R$. Since either $M''$ has finite homological dimension or $R$ is $m$-Gorenstein, it follows that $M''$ is $m$-torsionless. From (2') it follows that rank$(M'')=r-1$.\\

It only remains to verify (4'). It is elementary to show that $p'$ maps the kernel of $g$ onto the kernel of $g''$. But rank(Ker$(g))=$ rank$($Ker$(g''))$, whence Ker$(p'\mid_{Ker(g)})$ is a torsion-free $R$-module of rank $0$, and thus equal to the zero module.\\

\end{proof}

\begin{remark}
We may relax the hypotheses of Theorem 2 while ensuring that Theorem 1 is still applicable to get the following statement:\\

Suppose $M$ is an $R$-module such that $M_{\mf{p}}$ is free for $\mf{p} \in \mf{C}_m$ and such that $\mu(M_{\mf{p}}) \geq \mu(M_{\mf{q}})$ whenever codh$(R_{\mf{p}}) >$ codh$(R_{\mf{q}})$ for $\mf{p}, \mf{q} \in \mf{C}_{m}$. Further, let 

\begin{center}
$r:=$ min$\{\mu(M_{\mf{q}}) \mid \mf{q} \in $Ass $R \} \geq m,$\\ 
\end{center}

\noindent and $g, e_1, \dots , e_n, s$ be as in Theorem 2. Then there exist elements $x_s, \dots, x_n \in R^n$ satisfying conditions (1), (2) and (4) of Theorem 2 and such that the following condition (in place of (3)) holds: $(M/(\Sigma_{i=s}^{n}{Rg(x_i)}))_{\mf{p}}$ is free for $\mf{p} \in \mf{C}_m$ and there is $\mf{q} \in $Ass $R$ with $$\langle M/(\Sigma_{i=s}^{n}{Rg(x_i)}) \rangle _{\mf{q}} =m.$$

\noindent (The proof is similar to that of Theorem 2 and proceeds by induction on $r$, in which the conclusion of Theorem 1 serves as the inductive step. One requires only a slightly more subtle argument for (4): an $R$-homomorphism $f: N \rightarrow N'$, where $N$ is torsionfree, is injective if and only if for all $\mf{q} \in $Ass$ R$, $f \otimes_R R_{\mf{q}}$ is injective.)\\

\end{remark} 

The following corollary shows that under the slightly weaker hypotheses of Theorem 2 on $M$ (respectively $R$), the projective modules in an exact sequence:\\
 
\begin{center}
\mbox{%

\xymatrix{
{0} \ar[r] & {M} \ar[r] & {P_m} \ar[r] & {\dots} \ar[r] & {P_1}\\
}
}
\end{center}

\noindent can be chosen in the following prescribed manner:\\

\begin{corr}
Let $M$ be an $m$-torsionless $R$-module of well defined rank $r$, where $m \geq 2$. Let either \textup{dh}$(M)< \infty$ or $R_{\mf{p}}$ be a regular local ring for all $\mf{p} \in \mf{C}_{m-1}$. Then there exists an exact sequence\\

\begin{center}
\mbox{%

\xymatrix{
{0} \ar[r] & {M} \ar[r]^{f} & {R^{r+m-1}} \ar[r] & {R^{2m-3}} \ar[r] & {R^{2m-5}} \ar[r] & {\dots} \ar[r] & {R^3} \ar[r] & {R^1} .\\
}
}
\end{center}

\noindent In particular, $M$ admits an embedding $f: M \rightarrow R^{r+m-1}$ such that \textup{Coker}$(f)$ is $(m-1)$-torsionless and $M$ is the $(m-1)$-th syzygy module of an ideal of $R$. When $m>2$ this ideal is generated by three elements, and when $m=2$ it is generated by $r+1$ elements.\\

\end{corr}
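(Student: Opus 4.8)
The plan is to iterate Theorem 2, peeling off one rank at a time while simultaneously building the free modules $R^{2j-1}$ that appear in the sequence. First I would apply Theorem 2 with the given $m$: since $M$ is $m$-torsionless of rank $r\ge m$, and the epimorphism can be taken to be $R^{r+m-1}\to M$ once we know $M$ is generated by $r+m-1$ elements at the relevant primes, we obtain an embedding $M\hookrightarrow R^{r+m-1}$ (this $R^{r+m-1}$ plays the role of $F$ in Theorem 2, dualized). Actually, the cleaner way: Theorem 2 applied to the \emph{dual} situation, or more directly to Corollary-style statements, shows that $M$ embeds in a free module $R^{r+m-1}$ with cokernel $N_1$ that is $(m-1)$-torsionless; one checks the rank of $N_1$ is $(r+m-1)-r = m-1$, but we want to track that the cokernel picks up the next free module in the chain.

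The key induction is on $m$. For $m=2$: apply Theorem 2 (with $m=2$) to get $0\to M\to R^{r+1}\to N\to 0$ with $N$ $1$-torsionless (torsion-free) of rank $r+1-r=1$... wait, I need $N$ to be an ideal, i.e. torsion-free of rank $1$, which embeds in $R^1$. So the sequence is $0\to M\to R^{r+1}\to R^1$, exact, and $M$ is the first syzygy of the ideal $\operatorname{Im}(R^{r+1}\to R^1)\subset R$, which is generated by $r+1$ elements (the images of the basis of $R^{r+1}$). That gives the $m=2$ case with the stated generator count $r+1$. For the inductive step $m>2$: apply Theorem 2 to reduce $M$ (rank $r\ge m$) to the situation of an embedding $0\to M\xrightarrow{f} R^{r+m-1}\to N\to 0$ where, by condition (3) of Theorem 2 applied appropriately (after reducing rank to $m$ and using part (4)), $N$ is $(m-1)$-torsionless; its rank is $(r+m-1) - r = m-1$. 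Hmm, but I want rank exactly $m-1 \ge m-1$, so Corollary hypothesis "$m-1\ge 2$" i.e. $m\ge 3$ lets me recurse: apply the corollary inductively to $N$ with parameter $m-1$ and its rank $m-1$, getting an exact sequence $0\to N\to R^{(m-1)+(m-1)-1}\to R^{2(m-1)-3}\to\cdots\to R^1$, i.e. $0\to N\to R^{2m-3}\to R^{2m-5}\to\cdots\to R^3\to R^1$. Splicing $R^{r+m-1}\to N\hookrightarrow R^{2m-3}$ yields exactly the asserted sequence $0\to M\to R^{r+m-1}\to R^{2m-3}\to R^{2m-5}\to\cdots\to R^3\to R^1$.

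Then the "in particular" statements follow by reading off the sequence. The map $f: M\to R^{r+m-1}$ has cokernel $(m-1)$-torsionless by construction. Chasing the sequence, $M$ is the $(m-1)$-th syzygy module of the cokernel of the last map $R^3\to R^1$, which is an ideal $\mathfrak a\subset R$; the ideal is the image of $R^3\to R^1$ when $m>2$ (hence generated by $3$ elements), and when $m=2$ the relevant last map is $R^{r+1}\to R^1$, giving $r+1$ generators. One should verify the syzygy-count: $M\to R^{r+m-1}$ is step $1$, then $m-2$ further steps $R^{2m-3}\to R^{2m-5}\to\cdots\to R^3$ down to the map into $R^1$, totalling $m-1$ steps, so $M$ is indeed the $(m-1)$-st syzygy of $\mathfrak a$.

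The main obstacle I expect is checking that Theorem 2 can be legitimately invoked at each stage: one must verify that the cokernel $N$ at each step satisfies the hypotheses needed to recurse — namely that it is $(m-1)$-torsionless with well-defined rank and that either $\operatorname{dh}(N)<\infty$ or the relevant localizations $R_{\mathfrak p}$, $\mathfrak p\in\mathfrak C_{m-2}$, are regular local. The regularity hypothesis behaves well under lowering $m$ (since $\mathfrak C_{m-2}\subseteq\mathfrak C_{m-1}$), and $\operatorname{dh}(N)<\infty$ follows from $\operatorname{dh}(M)<\infty$ together with the exact sequence $0\to M\to R^{r+m-1}\to N\to 0$; the $(m-1)$-torsionlessness of $N$ is exactly what Theorem 2 delivers (via its condition (3) after rank reduction, together with the characterization $(b_m)$). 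One also needs, at the very first application, that $M$ is generated by $r+m-1$ elements \emph{locally at the primes that matter}, so that an epimorphism $R^{n}\to M$ with $n$ in the right range exists to feed into Theorem 2 — this is where the $s = n-r+m+1$ bookkeeping of Theorem 2 must be matched with $n = r+m-1$, giving $s = 2m$, consistent with splitting off $n-s+1 = r-m$ basis elements to drop the rank from $r$ to $m$. Once these compatibility checks are in place, the splicing is purely formal.
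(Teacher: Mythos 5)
Your overall scheme—produce one short exact sequence $0 \to M \to R^{r+m-1} \to N' \to 0$ with $N'$ $(m-1)$-torsionless of rank $m-1$, recurse on $N'$ with parameter $m-1$ to get $R^{2m-3}, R^{2m-5}, \dots, R^3, R^1$, splice, and then count generators ($3$ for $m>2$, $r+1$ for $m=2$) and syzygy steps—matches the paper's iteration, and those parts of your argument are fine. The genuine gap is in the one step you leave vague, which is the step that carries the whole proof: how to get the embedding $f\colon M \to R^{r+m-1}$ with $(m-1)$-torsionless cokernel in the first place. Theorem 2 applied to an epimorphism $g\colon R^n \to M$ only yields a quotient $M/\Sigma Rg(x_i)$ of rank $m$ together with a compatible free quotient of $R^n$; it never yields an embedding of $M$ into a free module, and ``dualizing'' does not rescue this (dualizing a presentation embeds $M^{\ast}$, not $M$). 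Your closing bookkeeping ($n=r+m-1$, $s=2m$, ``splitting off $r-m$ basis elements to drop the rank from $r$ to $m$'') shows you are applying Theorem 2 to $M$ itself, which would replace $M$ by a proper quotient—exactly what the corollary must not do, since $M$ is to be kept intact and embedded.

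What is needed, and what the paper does, is first to invoke the syzygy characterization $(s_m)$ (available under the stated hypotheses) to write $M$ as a first syzygy of an $(m-1)$-torsionless module: an exact sequence $0 \to M \to F \to N \to 0$ with $F$ free and $N$ $(m-1)$-torsionless, enlarging $F$ by a free direct summand if $\operatorname{rank}(F) < r+m-1$ (a case you do not address). Theorem 2 is then applied with parameter $m-1$ to the epimorphism $F \to N$ (note that $N$ has finite homological dimension if $M$ does, and the corollary's regularity hypothesis on $\mathfrak{C}_{m-1}$ is precisely the hypothesis of Theorem 2 for the parameter $m-1$—another sign the theorem is meant to be applied to the cokernel, not to $M$). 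Condition (3) cuts $N$ down to an $(m-1)$-torsionless module $N'$ of rank exactly $m-1$, and condition (4) is exactly what guarantees that the kernel of the induced map stays isomorphic to $\ker(F \to N) \cong M$ while $F$ is replaced by the free quotient $F/\Sigma Rx_i \cong R^{r+(m-1)} = R^{r+m-1}$. With that step supplied, your recursion, splicing, and the concluding counts go through as you describe.
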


\begin{proof}

Projective modules in the above exact sequence that satisfy property $(s_m)$ and are $m$-torsionless can without loss of generality be assumed to be free, so that there exists an exact sequence \\

\begin{center}
\mbox{

\xymatrix{
{0} \ar[r] & {M} \ar[r] & {F} \ar[r] & {N} \ar[r] & {0}\\
}
}
\end{center}

\noindent where $F$ is free and $N$ is $(m-1)$-torsionless. $N$ has a well defined rank and is of finite homological dimension if $M$ is. Applying Theorem 2 to the epimorphism $F \rightarrow N$ gives in the case that rank$(F) \geq r+m-1$ an exact sequence $$0 \rightarrow M \rightarrow F' \rightarrow N' \rightarrow 0,$$ where $N'$ is $(m-1)$-torsionless and of rank $m-1$, whence $F'$ is isomorphic to $R^{r+m-1}$. If  rank$(F) < r+m-1$ one obtains this exact sequence by adding to $F$ a free direct summand of the appropriate rank.

\end{proof}

The observation just made about $M$ yields for $N'$ an exact sequence 

$$0 \rightarrow N' \rightarrow R^{2m-3} \rightarrow N'' \rightarrow 0,$$ 

\noindent such that $N''$ is $(m-2)$-torsionless, etc.\\

A theorem of Bourbaki [2, pg. 76, Theor\`{e}me 6] is obtained as a special case of Theorem 2. Bourbaki proves the special case $m=1$ in the following corollary (under the additional assumption that $R$ does not contain any zero-divisors).\\

\begin{corr}

Let the localizations $R_{\mf{p}}$ be regular local rings for $\mf{p} \in \mf{C}_m$ and let $M$ be an $m$-torsionless $R$-module having a well-defined rank $\geq m$. Then there exists a free submodule $F$ of $M$, such that $M/F$ is $m$-torsionless and of rank $m$. 

\end{corr}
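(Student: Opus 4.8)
The plan is to read this off directly from Theorem 2; essentially all of the real work has already been done there (and, through it, in Theorem 1 on the existence of basic elements), so what remains is only to locate the free submodule we want inside the data that Theorem 2 produces.

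First I would dispose of the case $r = m$: here one simply takes $F = 0$, and $M/F = M$ is $m$-torsionless of rank $m$ by hypothesis. So assume from now on that $r > m$. Since $M$ is finitely generated, fix an epimorphism $g \colon R^n \to M$ together with a basis $e_1, \dots, e_n$ of $R^n$, and set $s := n - r + m + 1$. Note that $1 \le s \le n$: one has $r \le n$, since tensoring $g$ with $Q(R)$ yields a surjection $Q(R)^n \to M \tensor Q(R) = Q(R)^r$ of free $Q(R)$-modules, while $s \le n$ because $r > m$. The hypotheses of Theorem 2 are satisfied for $M$ and $g$: $M$ is $m$-torsionless of rank $r \ge m$, and $R_{\mf{p}}$ is a regular local ring for every $\mf{p} \in \mf{C}_m$ by assumption, so the second alternative in the hypothesis of Theorem 2 holds (one could equally invoke $\mathrm{dh}(M) < \infty$ in the variant where that is assumed instead).

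Next I would apply Theorem 2 to $g$, obtaining elements $x_s, \dots, x_n \in R^n$ that satisfy conditions (1)--(4) there, and set
$$ F := \sum_{i=s}^{n} R\,g(x_i) \subseteq M . $$
By condition (3), $M/F = M'$ is $m$-torsionless of rank $m$, so it remains only to verify that $F$ is free. Condition (2) says that $g(x_s), \dots, g(x_n)$ are linearly independent over $R$, so the obvious surjection $R^{\,n-s+1} \to F$ is injective, hence an isomorphism, and $F \isom R^{\,n-s+1} = R^{\,r-m}$ is free. (If one prefers not to rely on the precise reading of ``linearly independent'' in the statement of Theorem 2, the same conclusion follows from its proof: the $x_i$ are produced one at a time, so $F$ inherits a filtration $0 = F_0 \subset F_1 \subset \dots \subset F_{r-m} = F$ whose successive quotients are rank-one free $R$-modules --- each $F_k/F_{k-1}$ being generated by the image of the next chosen element in $M/F_{k-1}$ --- and since $R$ is projective every such extension splits, giving $F \isom R^{\,r-m}$ by induction.) This proves the corollary.

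As for where the difficulty lies: there is essentially no obstacle beyond bookkeeping, since all the substance is carried by Theorem 2 (reduction of ranks) and, behind it, Theorem 1 (basic elements). The only point meriting genuine care is the freeness of $F$ --- that is, turning the linearly independent family $g(x_s), \dots, g(x_n)$ into an honest free submodule with $M/F$ of the prescribed type --- and the filtration argument above settles it while also making transparent that $\rank F = r - m$ and $\rank(M/F) = m$. Specialising to $m = 1$ (with $R$ a domain) recovers Bourbaki's classical theorem.
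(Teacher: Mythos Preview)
Your proof is correct and matches the paper's approach: the paper presents this corollary as an immediate consequence of Theorem 2 without further argument, and you have simply unpacked that implication by taking $F = \sum_{i=s}^n Rg(x_i)$ and reading off its freeness from condition (2) and the $m$-torsionlessness of $M/F$ from condition (3). Your parenthetical filtration argument for freeness is a nice safety net but unnecessary, since ``linearly independent'' in condition (2) of Theorem 2 means precisely that the map $R^{n-s+1} \to F$ is injective.
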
 

\section{On the extension of certain projective resolutions to resolutions of ideals generated by three elements}

We are now prepared to give an easy proof of the following result that was highlighted in the introduction:\\

\begin{thm}
Let $M$ be an $m$-torsionless $R$-module having a well-defined rank, and let
 
\begin{center}
\mbox{

\xymatrix{
{0} \ar[r] & {F_n} \ar[r]^{f_n} & {F_{n-1}} \ar[r] & {\dots} \ar[r] & {F_{m+1}} \ar[r]^{f_{m+1}} & {F_{m}} \ar[r]^{g} & {M} \ar[r] & {0} \\
}
}
\end{center}

\noindent be a projective resolution of $M$, with $n>m \geq 1$, $F_m$ a free $R$-module and $r:= $\textup{rank(Im}$(f_{m+1}))$. Then there exist homomorphisms $c: F_m \rightarrow R^{r+m}, f_m: R^{r+m} \rightarrow R^{2m-1}, f_j: R^{2j+1} \rightarrow R^{2j-1}, j=1, \dots , m-1,$ such that if $f'_{m+1}:=c \circ f_{m+1}$, the sequence:\\

\begin{center}
\mbox{%

\xymatrix{
{0} \ar[r] & {F_n} \ar[r]^-{f_n} & {F_{n-1}} \ar[r] & {\dots} \ar[r] & {F_{m+2}} \ar[r]^-{f_{m+2}} & {F_{m+1}} \ar[r]^-{f'_{m+1}} & {R^{r+m}} \\ & \ar[r]^-{f_m} & {R^{2m-1}} & \ar[r]^{f_{m-1}} & R^{2m-3} & \ar[r] & {\dots} \ar[r] & {R^3} \ar[r]^-{f_1} & {R} \\
}
}
\end{center}

\noindent is exact. 
\end{thm}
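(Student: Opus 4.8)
The plan is to strip off the free module $F_m$ and reduce the statement to Theorem~2 applied to the epimorphism $g$, followed by Corollary~1 applied to an $m$-torsionless module of rank exactly $m$. Set $N := \operatorname{Im}(f_{m+1}) = \ker(g)$. From the tail $0\to F_n\to\cdots\to F_{m+1}\to N\to 0$ of the given resolution, $N$ has finite homological dimension; and from the short exact sequence $0\to N\to F_m\to M\to 0$ (with $F_m$ free and $M$ of well-defined rank) $N$ has a well-defined rank, necessarily $r$. I claim it is enough to produce a homomorphism $c\colon F_m\to R^{r+m}$ whose restriction $c|_N$ is injective and for which $M^* := \operatorname{Coker}(c|_N)$ is an $m$-torsionless $R$-module of rank $m$ with $\operatorname{dh}(M^*)<\infty$. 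Indeed, writing $q\colon R^{r+m}\to M^*$ for the quotient map and $f'_{m+1} := c\circ f_{m+1}$, one has $\ker(q) = c(N) = \operatorname{Im}(f'_{m+1})$ and $\ker(f'_{m+1}) = \ker(f_{m+1})$ (because $c$ is injective on $\operatorname{Im}(f_{m+1}) = N$), so all that remains is to continue $q$ into a resolution of an ideal of $R$; for $m\ge 2$ this is exactly what Corollary~1 provides for $M^*$, while for $m=1$ one uses instead that a finitely generated torsion-free module of rank one embeds into $R$ after clearing denominators.

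To build $c$, I would first make the rank hypothesis of Theorem~2 available. Put $\bar F := F_m\oplus R^{m}$, $\bar M := M\oplus R^{m}$, and $\bar g := g\oplus\operatorname{id}_{R^m}\colon\bar F\to\bar M$. Then $\bar M$ is still $m$-torsionless (the class of $m$-torsionless modules of finite $\operatorname{dh}$ is closed under adding free summands, by characterization $(b_m)$ together with the fact that $R$ itself is $m$-torsionless), $\operatorname{dh}(\bar M)<\infty$, $\operatorname{rank}(\bar M) = \operatorname{rank}(M)+m\ge m$, and, crucially, $\ker(\bar g) = \ker(g) = N$. Applying Theorem~2 to $\bar g$, using a basis of $\bar F$ extending one of $F_m$, gives a quotient map $p\colon\bar F\to\tilde F$ onto a free module $\tilde F$ of rank $\operatorname{rank}(\bar F)-(\operatorname{rank}(\bar M)-m) = r+m$ (here I use $\operatorname{rank}(\bar F)-\operatorname{rank}(\bar M) = \operatorname{rank}(N) = r$), such that $p$ restricts to an isomorphism from $N = \ker(\bar g)$ onto $\ker(\bar g'\colon\tilde F\to\bar M')$, where $\bar M'$ (the module called $M'$ in Theorem~2) is $m$-torsionless of rank $m$. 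Now let $c\colon F_m\to R^{r+m}$ be the composite of the inclusion $F_m\hookrightarrow\bar F$, the map $p$, and an isomorphism $\tilde F\cong R^{r+m}$. Since $N\subseteq F_m$, the restriction $c|_N = p|_N$ is injective, and $M^* := \operatorname{Coker}(c|_N) = \tilde F/p(N) = \tilde F/\ker(\bar g')\cong\bar M'$ is $m$-torsionless of rank $m$; finally the sequence $0\to N\to\tilde F\to M^*\to 0$ shows $\operatorname{dh}(M^*)<\infty$.

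It then remains to assemble the tail and verify exactness. If $m\ge2$, apply Corollary~1 to $M^*$ (valid: $M^*$ is $m$-torsionless of rank $m$, $\operatorname{dh}(M^*)<\infty$, and $m\ge2$) to obtain an exact sequence $0\to M^*\to R^{2m-1}\to R^{2m-3}\to\cdots\to R^3\to R$; let $f_m\colon R^{r+m}\to R^{2m-1}$ be the composite of $R^{r+m}\cong\tilde F$, the quotient $\tilde F\to M^*$, and the embedding $M^*\hookrightarrow R^{2m-1}$, and let $f_{m-1},\dots,f_1$ be the remaining maps of that sequence. If $m=1$, embed $M^*$ into $R$ and take $f_1=f_m$ to be $R^{r+1}\to M^*\hookrightarrow R$. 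The resulting sequence $0\to F_n\to\cdots\to F_{m+1}\xrightarrow{f'_{m+1}}R^{r+m}\xrightarrow{f_m}R^{2m-1}\to\cdots\to R$ is then exact: to the left of $F_{m+1}$ it coincides with the original resolution; at $F_{m+1}$ one has $\ker(f'_{m+1}) = \ker(c\circ f_{m+1}) = \ker(f_{m+1}) = \operatorname{Im}(f_{m+2})$ by injectivity of $c$ on $N$; at $R^{r+m}$ one has $\operatorname{Im}(f'_{m+1}) = c(N) = p(N) = \ker(q) = \ker(f_m)$; and exactness at $R^{2m-1},\dots,R^3$ is inherited from the sequence produced by Corollary~1 (with nothing to check there when $m=1$).

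The step I expect to be the main obstacle is the stabilization bookkeeping: Theorem~2 requires its target module to have rank at least $m$, yet $F_m$ is prescribed by the given resolution, so one is forced to enlarge $F_m$ (and $M$) by a free summand, apply Theorem~2 to the enlarged epimorphism, and then restrict the resulting quotient map back down to $F_m$. What makes this harmless is that enlarging by a free summand leaves $\ker(g) = \operatorname{Im}(f_{m+1})$ untouched, so the restricted map $c$ remains injective on $N$ and still has $M^*$ as its cokernel. The one other point deserving care is that it is $\operatorname{Coker}(c|_N) = R^{r+m}/c(\operatorname{Im}f_{m+1})$, not $\operatorname{Coker}(c) = R^{r+m}/c(F_m)$, that must be fed into Corollary~1, since it is the former that has to equal $\ker(f_m)$.
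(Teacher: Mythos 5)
Your proposal is correct and follows essentially the same route as the paper: apply Theorem~2 to cut the cokernel down to an $m$-torsionless module of rank $m$ while condition~(4) of that theorem preserves $\operatorname{Ker}(g)=\operatorname{Im}(f_{m+1})$, then splice in the sequence from Corollary~1. The only differences are cosmetic: the paper splits into the cases $\operatorname{rank}(F_m)\le r+m$ (pad $F_m$ by a free summand) and $\operatorname{rank}(F_m)>r+m$ (apply Theorem~2 directly to $g$), whereas you stabilize uniformly by adding $R^m$ to both $F_m$ and $M$ before invoking Theorem~2, and you spell out the $m=1$ case (embedding a rank-one torsionless module into $R$) which the paper leaves implicit.
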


\begin{proof}
If $u:= rank(F_m) \leq r+m$, we may choose $c$ to be the embedding $F_m \rightarrow F_m \oplus R^{r+m-u}$. Then\\

\begin{center}
\mbox{%

\xymatrix{
{0} \ar[r] & {F_n} \ar[r]^-{f_n} & {F_{n-1}} \ar[r] & {\dots} \ar[r] & {F_{m+2}} \ar[r]^-{f_{m+2}} & {F_{m+1}} \ar[r]^-{f'_{m+1}} & {R^{r+m}} \\
}
}
\end{center}

\noindent is exact, Coker$(f'_{m+1})$ is $m$-torsionless and of rank $m$.\\

If $u>r+m$, then by Theorem 2 there exist basis elements $x_s, \dots, x_u, s:= r+m+1$ of $F_m$ such that $M':= M/(Rg(x_s)+ \dots + Rg(x_u))$ is $m$-torsionless and of rank $m$. Further, the natural epimorphism $c: F_m \rightarrow F_m/(Rx_s + Rx_u) \cong R^{r+m}$ induces an exact sequence: $0 \rightarrow$ Im$(f_{m+1}) \cong$ Coker$(f_{m+2}) \rightarrow R^{r+m} \rightarrow M' \rightarrow 0$. Thus $f'_{m+1}$ has the required propery in this case as well. Finally, Corollary 1 to Theorem 2 gives the homomorphisms $f_i, i=1, \dots, m$.\\

\end{proof}

\begin{remark}
\begin{enumerate}
\item When the localizations $R_{\mf{p}}$ are regular local rings, Theorem 3 applies to infinite resolutions also. (See Theorem 2 and Corollary 1.)\\
\item It would suffice to prove Theorem 3 in the case that the modules $F_{n-1}, \dots, F_{m}$ are free and $F_n$ is projective and of well-defined rank, since $M$ has such a resolution. Conversely, if $M$ has such a resolution, then $M$ automatically has a well-defined rank.\\
\item If all the $F_i$ are free modules, then $M$ is $m$-torsionless if and only if grade$(I(f_j)) \geq j$ for all $j= m+1, \dots, n$. (The following definition of $I(f_j)$ is found in [3]: $I(f_j)$ is the (rank(Coker$(f_j)))$-th Fitting ideal of Coker$(f_j)$). This follows immediately from the characterization of $m$-torsionless $R$-modules of finite homological dimension ($b_m$) in the introduction.\\
\item In Theorem 2 (and its corollaries) $m$ is always a lower bound for the rank of the constructed $m$-torsionless $R$-modules. We conjecture that this is not a consequence of our method of proof, but is in fact because there are no non-projective $m$-torsionless $R$-modules of finite homological dimension and of rank $< m$. This is always true if either $m \leq 2$ or dh$(M) \leq 1$. To see this it is enough to consider the case when $R$ is local. The case $m=1$ is trivial. A $1$-torsionless, or even more generally, a $2$-torsionless $R$-module of rank $1$ is isomorphic to an ideal $\mf{a}$ of $R$, which under the given hypotheses has a finite free resolution and has grad$(\mf{a}) \geq 1$. But then $\mf{a}$ is isomorphic to an ideal $\mf{a'}$ with grade$(\mf{a'}) \geq 2$ ([8, Corollary 5.6] or [3, Corollary 5.2]). The ideal $\mf{a'}$ is not $2$-torsionless, unless $\mf{a'} = R$. When dh$(M) \leq 1$, our claim follows from known estimates on the degree of determinantal ideals: the degree of a (rank$(M))$-th Fitting ideal of $M$ is, for $M$ free, less than or equal to rank$(M)+1$ (see also [10, Theorem 2]).\\
\item {[3, Theorem 8.1]} derives a particular case of Theorem 3, where $n=3$, $m=2$, rank$(F_2)=$ rank$(F_1)+2$. The proof given there uses the structure theory of free resolutions developed in [3].\\
\end{enumerate}
\end{remark}

The following corollary says that every natural number that is the homological dimension of an $m$-torsionless $R$-module is also the homological dimension of a ($2m+1$)-generated $m$-torsionless $R$-module of rank $m$. The special case $m=1$ was proved using entirely different methods by Burch [4] and Kohn [7].\\

\begin{cor}
Let $s, m \geq 1$ be natural numbers such that there exists an $R$-module $N$ with \textup{dh}$(N)=s+m$. Then there is an $m$-torsionless $R$-module $M$ with \textup{dh}$(M)=s$, having rank $m$, and generated by at most $2m+1$ elements.
\end{cor}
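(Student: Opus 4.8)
The plan is to realise $M$ as an $m$-th syzygy of $R/\mathfrak a$ for an ideal $\mathfrak a$ generated by three elements, obtained by feeding a high enough syzygy of $N$ into Theorem 3 with the parameter shifted up by one.

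First I would pass to $L:=\Omega^{m+1}(N)=\operatorname{Im}(G_{m+1}\to G_m)$, where $0\to G_{s+m}\to\cdots\to G_0\to N\to 0$ is a finite free resolution. Then $L$ satisfies $(s_{m+1})$, and the truncation $0\to G_{s+m}\to\cdots\to G_{m+1}\to L\to 0$ shows $\operatorname{dh}(L)<\infty$, so by $(s_{m+1})\Leftrightarrow(t_{m+1})$ the module $L$ is $(m+1)$-torsionless; it has a well-defined rank because it admits a finite free resolution. Chasing the short exact sequences $0\to\Omega^{j+1}(N)\to G_j\to\Omega^{j}(N)\to 0$ and using that such a sequence lowers $\operatorname{dh}$ by at most one while $\operatorname{dh}(N)=s+m$, one gets $\operatorname{dh}(L)=s-1$.

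Next I would apply Theorem 3 to $L$ \emph{with $m+1$ in place of $m$}, using a free resolution of $L$ of length $s-1$; for $s\ge 2$ this length is positive, so the hypothesis $n>m+1$ of Theorem 3 holds with $n=(m+1)+(s-1)=m+s$ (for $s=1$ one must first pad the already free resolution of $L$; see below). Read as a projective resolution, the exact complex produced by Theorem 3 is a resolution of $R/\mathfrak a$ for a three-generated ideal $\mathfrak a=\operatorname{Im}(f_1)$, of the shape
$$R\ \longleftarrow\ R^{3}\ \longleftarrow\ R^{5}\ \longleftarrow\ \cdots\ \longleftarrow\ R^{2m+1}\ \longleftarrow\ R^{\,r+m+1}\ \longleftarrow\ \bar G_{m+2}\ \longleftarrow\ \cdots\ \longleftarrow\ \bar G_{m+s},$$
with $r=\operatorname{rank}\operatorname{Im}(f_{m+2})$. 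I then take $M:=\Omega^{m}(R/\mathfrak a)=\operatorname{Im}\!\left(R^{2m+1}\to R^{2m-1}\right)$. This $M$ is the desired module: it is a homomorphic image of $R^{2m+1}$, so $\mu(M)\le 2m+1$; telescoping ranks along the prefix $R,R^{3},R^{5},\dots,R^{2m+1}$ gives $\operatorname{rank}\Omega^{j}(R/\mathfrak a)=j$ for $1\le j\le m$, so $\operatorname{rank}(M)=m$; and $M$ is an $m$-th syzygy of a finite free resolution, hence satisfies $(s_m)$ with $\operatorname{dh}(M)<\infty$, so $M$ is $m$-torsionless.

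The remaining point, $\operatorname{dh}(M)=s$, is the one I expect to be delicate. By the proof of Theorem 3 the module $\Omega^{m+1}(R/\mathfrak a)=\operatorname{Coker}(f'_{m+2})$ is obtained from $L$ by adjoining a free summand or by factoring out a submodule freely generated by linearly independent elements; in either case $\operatorname{dh}$ is unchanged, since for $0\to R\to X\to X'\to 0$ the long exact $\operatorname{Ext}$-sequence together with $\operatorname{Ext}^{\ge 1}(R,-)=0$ forces $\operatorname{dh}(X')=\operatorname{dh}(X)$ as soon as $\operatorname{dh}(X)\ge 1$. Hence $\operatorname{dh}\Omega^{m+1}(R/\mathfrak a)=\operatorname{dh}(L)=s-1$; for $s\ge 2$ this is $\ge 1$, and then $0\to\Omega^{m+1}(R/\mathfrak a)\to R^{2m+1}\to M\to 0$ gives $\operatorname{dh}(M)=(s-1)+1=s$. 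For $s=1$, $L$ is already free, the construction only yields $\operatorname{dh}(M)\le 2$ a priori, and one has to argue directly that $\operatorname{dh}(R/\mathfrak a)=m+1$ (equivalently, that $M$ is not projective); I would expect this to follow either from a grade estimate on the Fitting ideals $I(f_j)$ of the resolution (forcing $\operatorname{grade}(\mathfrak a)\ge m+1$) or, in the rank-one case $m=1$, to be exactly the theorem of Burch and Kohn invoked in the statement. This verification — pinning $\operatorname{dh}(M)$ down to $s$ rather than merely bounding it — is where the real work lies.
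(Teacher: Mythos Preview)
Your approach differs from the paper's and carries two genuine gaps. First, you assume $N$ admits a finite \emph{free} resolution, but only $\mathrm{dh}(N)=s+m$ is given; over a general Noetherian ring the top term $G_{s+m}$ is a priori only projective and need not have well-defined rank, so neither does your $L=\Omega^{m+1}(N)$, and Theorem~3 (which requires well-defined rank) cannot be invoked. Second, you leave the case $s=1$ unresolved: padding the resolution of the projective module $L$ by a zero term yields a top map that trivially splits, so neither your $\mathrm{dh}$-preservation argument nor any non-splitting argument can pin down $\mathrm{dh}(M)$; your suggested fallback via grade estimates or Burch--Kohn is not carried out.

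The paper avoids both problems by discarding $N$ immediately: from $\mathrm{dh}(N)=n:=s+m$ it extracts only the existence of an $R$-regular sequence $x_1,\dots,x_n$, and then works with the Koszul complex on this sequence in place of any resolution of $N$. All terms are free, so ranks are automatic, and the top map $f_n\colon R\to R^n$, $1\mapsto(x_1,\dots,x_n)$, never splits because the $x_i$ generate a proper ideal. For $s=1$ one simply takes $M=R^n/R(x_1,\dots,x_n)$. For $s\ge 2$ one applies Theorem~3 (with $m+1$ in place of $m$) to the truncated Koszul complex; for $s>2$ the map $f_n$ is untouched by the construction, and for $s=2$ one checks directly that $f'_{m+2}=f_n$. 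In either case non-splitting of $f_n$ forces $\mathrm{dh}(M)=s$ without opening up the proof of Theorem~2. Your $\mathrm{dh}$-preservation argument for $s\ge 2$ is in itself sound, but the Koszul route makes it unnecessary and simultaneously disposes of $s=1$.
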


\begin{proof}
Let $n:=s+m$. Since dh$(N)=n$, there exists an $R$-regular sequence $x_1, \dots, x_n$. When $s=1$, we let $M=R^n/R(x_1, \dots x_n)$. If $s>1$, consider the following section of the Koszul complex associated to $(x_1, \dots, x_n)$:\\

\begin{center}
\mbox{%

\xymatrix{
{0} \ar[r] & {R} \ar[r]^-{f_n} & {R^n} \ar[r]^-{f_{n-1}} & {\wedge^{2}R^n} \ar[r] & {\dots} \ar[r] & {\wedge^{s-2}R^n} \ar[r]^-{f_{m+2}} & {\wedge^{s-1}R^n}.\\
}
}
\end{center}

\noindent Coker$(f_{m+2})$ is $(m+1)$-torsionless and has a well-defined rank; let $r:=$ rank(Im$(f_{m+2}))$. By Theorem 3, there are homomorphisms $f'_{m+2}, f_{m+1}$ such that the complex

\begin{center}
\mbox{%

\xymatrix{
{0} \ar[r] & {R} \ar[r]^-{f_n} & {R^n} \ar[r] & {\dots} \ar[r] & {\wedge^{s-3}R^n} \ar[r]^-{f_{m+3}} & {\wedge^{s-2}R^n}\\ \ar[r]^-{f'_{m+2}} & {R^{r+m+1}} \ar[r]^-{f_{m+1}} & {R^{2m+1}}\\
}
}
\end{center}

\noindent is exact and Coker$(f_{m+1})$ is $m$-torsionless. It is clear that the claims pertaining to the	rank and the number of generators of $M$ hold. When $s=2$, we have $f'_{m+2}=f_n$ (indeed, rank(Coker$(f_n))=m+1$) and when $s>2$, $f_n$ is independent of the construction of $f'_{m+2}$. Since $f_n$ does not split, we have dh$(M)=s$.\\

If the conjecture made in Remark 4 to Theorem 3 holds, then there is an $m$-torsionless $R$-module of finite homological dimension (in fact of homological dimension $\leq 1$), generated by less than $2m+1$ elements. \footnote{ \emph{Addendum \& Correction}. D. Eisenbud has informed us that the conjecture stated in Remark 4 to Theorem 3 was first made by P. Hackman in a currently unpublished article ``Exterior Powers and Homology''. The referee requested that we make a reference to the article ``Tout ideal premier d'un anneau noeth\'{e}rien est associ\'{e} \`{a} un ideal engendr\'{e} par trois \'{e}l\'{e}ments'' by T. Gulliksen (\emph{C. R. Acad. Sci. Paris Ser. A} \textbf{271} (1970), 1207-1208). The main result of the paper stated in its title follows immediately from the corollary to Theorem 3.} (It is sufficient, again, to consider only local rings $R$; then $M$ has a well-defined rank whenever dh$(M) < \infty$.)

\end{proof}

\end{document}